\newtheorem{thm}{Theorem} 
\newtheorem{lemma}{Lemma}
\def\P{\mathbb{P}}
\def\E{\mathbb{E}}
\def\C{\mathbb{C}}
\def\R{\mathbb{R}}
\def\eps{\varepsilon}
\DeclareMathOperator{\Tr}{Tr}
\newcommand{\indic}[1]{\mathbf{1}_{\{#1\}}}
\begin{document}

\title{Cycle structure of the interchange process and representation theory}

\author{Nathana\"el Berestycki\thanks{DPMMS, Cambridge
    University. Supported in part by EPSRC grants EP/GO55068/1 and
    EP/I03372X/1.} \and Gady Kozma\thanks{Weizmann
    Institute. Supported in part by the Israel Science Foundation}}

\date{}

\maketitle

\begin{abstract}
  Consider the process of random transpositions on the complete graph
  $K_n$. We use representation theory to give an exact, simple
  formula for the expected number of cycles of size $k$ at time $t$,
  in terms of an incomplete Beta function. Using this we show that the
  expected number of cycles of size $k$ jumps from 0 to its
  equilibrium value, $1/k$, at the time where the giant component of
  the associated random graph first exceeds $k$. Consequently we
  deduce a new and simple proof of Schramm's theorem on random
  transpositions, that giant cycles emerge at the same time as the
  giant component in the random graph. We also calculate the
  ``window'' for this transition and find that it is quite thin. Finally, we give a new proof of a result by the first author and Durrett that the random transposition process exhibits a certain slowdown transition.
  The
  proof makes use of a recent formula for the character decomposition of the number of cycles of a given size in a permutation, and the Frobenius formula for the character ratios.
\end{abstract}


\section{Introduction and main results}

Consider the complete graph $K_n$ on $n$ vertices, and let $\sigma_t$
be the random walk on $S_n$ that results when considering the
interchange process on $K_n$. That is, $\sigma_t$ is the usual random
transposition process (see e.g.\ \cite{Schramm}) sped up by a factor
$\binom n2$.

Our first result gives an exact and surprisingly simple formula for the expected number of cycles of size $k$ at time $t$.

\begin{thm} \label{T:exact}
  Fix any $1\le k \le n$ and let $s_k(t)$ be the number of cycles of
  size $k$ at time $t$ in $\sigma_t$. Then
  \[
    \E(s_k(t)) =
      \binom nk \left[ \frac1k x\phi(x) + \int_x^1 \phi(y) dy
        \right]
,
  \]
  where $\phi(y) = y^{n-k}(1-y)^{k-1} $ and $x= e^{-tk}$.
  \end{thm}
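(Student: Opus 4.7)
The plan is to expand $s_k$ as a sparse linear combination of irreducible characters of $S_n$, and then use that each such character is an eigenfunction of the random transposition semigroup. The spectral ingredient is standard: because the walk is conjugation-invariant, every irreducible character $\chi^\lambda$ is an eigenfunction of the semigroup, and by Frobenius's ``content'' formula for the normalized character at a transposition one gets
\[
\E[\chi^\lambda(\sigma_t)] = \dim(\lambda)\,e^{t\mu_\lambda}, \qquad \mu_\lambda = \sum_{(a,b)\in\lambda}(b-a) - \binom{n}{2}.
\]

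The algebraic heart of the argument is the decomposition
\[
s_k = \frac{1}{k}\chi^{(n)} + \frac{1}{k}\sum_{i=0}^{k-1}(-1)^i \chi^{\lambda_i}, \qquad \lambda_i := (n-k,\, k-i,\, 1^i).
\]
I would derive this by computing $\sum_{\sigma\in F}\chi^\lambda(\sigma)$, where $F$ is the set of permutations for which $\{1,\dots,k\}$ forms a $k$-cycle: restriction to $S_k \times S_{n-k}$ via Littlewood--Richardson reduces the inner sum to $(n-k)!\sum_{\mu\vdash k}c^\lambda_{\mu,(n-k)}\chi^\mu(k\text{-cycle})$, and Murnaghan--Nakayama then pins down the $k+1$ near-hook shapes $(n), \lambda_0, \dots, \lambda_{k-1}$ as the only contributors, with signs alternating in $i$. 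A direct content calculation gives $\mu_{\lambda_i} = -k(n-k+i+1)$, and the hook-length formula yields $\dim(\lambda_i) = \binom{n}{k}\binom{k-1}{i}\frac{n-2k+i+1}{n-k+i+1}$. Combining,
\[
\E[s_k(t)] = \frac{1}{k}\Bigl[1 + \sum_{i=0}^{k-1}(-1)^i \dim(\lambda_i)\,x^{n-k+i+1}\Bigr], \qquad x = e^{-kt}.
\]

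The last step matches this polynomial to the claimed Beta form: the binomial expansion $(1-y)^{k-1} = \sum_i (-1)^i\binom{k-1}{i} y^i$ gives $\phi(y) = \sum_i (-1)^i\binom{k-1}{i} y^{n-k+i}$, so that $\binom{n}{k}[\frac{1}{k}x\phi(x) + \int_x^1\phi(y)\,dy]$ expands termwise and reproduces the previous display, with the Beta identity $\binom{n}{k}\int_0^1\phi(y)\,dy = 1/k$ accounting for the constant term. The main obstacle is the character decomposition of $s_k$: identifying exactly these $k+1$ near-hook shapes as the only contributors, with the correct alternating signs, is where the Littlewood--Richardson-plus-Murnaghan--Nakayama bookkeeping has real content. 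Once the sparse decomposition is in hand, the content, hook-length, and binomial-to-Beta manipulations are routine.
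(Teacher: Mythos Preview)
Your approach is essentially the paper's: expand $s_k$ in irreducible characters (the Alon--Kozma decomposition), use the content/Frobenius formula for the eigenvalues and the hook-length formula for the dimensions, and then recognize the resulting alternating sum as the binomial expansion of the incomplete Beta expression. Your formulas $\mu_{\lambda_i}=-k(n-k+i+1)$ and $\dim(\lambda_i)=\binom{n}{k}\binom{k-1}{i}\frac{n-2k+i+1}{n-k+i+1}$ agree with the paper, and the Beta matching at the end is the same computation.

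There is, however, a real gap: your decomposition with $\lambda_i=(n-k,k-i,1^i)$ is only valid for $k\le n/2$. When $k>n/2$ and $0\le i\le 2k-n-2$, the tuple $(n-k,k-i,1^i)$ is not a partition (the second part exceeds the first), and your Littlewood--Richardson/Murnaghan--Nakayama computation will instead turn up the shapes $(k-i-1,\,n-k+1,\,1^i)$ with coefficient $(-1)^{i+1}/k$; moreover, no representation contributes at $i=2k-n-1$. The paper treats this case separately and observes what it calls an ``eerie'' coincidence: the Frobenius character ratio for $(k-i-1,n-k+1,1^i)$ equals that for the formal symbol $(n-k,k-i,1^i)$, and the hook-length formula gives the same expression up to a sign that is cancelled by the sign flip in the coefficient. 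Hence your polynomial for $\E[s_k(t)]$ is correct for all $k$, but only after this second verification; as written, your argument establishes the theorem only for $k\le n/2$.

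A minor point: the paper cites the Alon--Kozma decomposition as an input, whereas you sketch a derivation via restriction and Murnaghan--Nakayama. That sketch is reasonable, but it is itself the content of a separate paper and would need to be carried out carefully (including in the $k>n/2$ regime) to stand on its own.
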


This integral is known as the incomplete beta function (the
integral from $0$ to $1$ is the regular beta function).
The proofs of Theorem \ref{T:exact} is based on representation
theory. The key argument is a formula of Gil Alon and one of us \cite{AlonKozma} for the character decomposition of the number of cycles of a permutation, as well as Frobenius' formula for the values of the character ratios.

Our second result uses the above formula to show that in the limit as
$n \to \infty$, the quantity $\E(s_k(t))$ exhibits a sharp transition
from the value $0$ to $1/k$ at a time $t_{n,k}$ which is essentially
$(-1/k) \log(1-k/n)$. We note that this time is an order magnitude
smaller than the mixing time for $\sigma_t$ which (with this
parametrization) is $(\log n)/n$ (see \cite{DiaconisSh} or
\cite{BSZ}). The width of this transition is shown to be order
$1/n^{3/2}$ when $k$ is of order $n$ (which corresponds to a width of
order $\sqrt{n}$ in the traditional scaling of random
transpositions). This is reminiscent of the cutoff phenomenon for the
mixing time, see \cite{DiaconisSh}, \cite{LS} or \cite[\S 18]{LPW09} for
the cutoff phenomenon.

\begin{thm}
  \label{T:trans}
  Let $1\le k < n$ and let $t_{n,k}$ be the unique $t$ such that $e^{-kt} = (n-k)/(n-1).$ 
  Then
\[
  \bigg|\E(s_k(t)) - \frac 1k\indic{t>t_{n,k}} \bigg| \le Cq \exp\left\{ - c(n-k)\min\{|t-t_{n,k}|^2k^2,1\} \right\}
  \]
  where $q=q_n(k)$ is a polynomial factor, $q=n^{3/2}k^{-3/2}(n-k)^{-1/2}$.
\end{thm}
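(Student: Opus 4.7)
The plan is to derive the stated estimate by direct asymptotic analysis of the closed form in Theorem~\ref{T:exact}. The function $\phi(y)=y^{n-k}(1-y)^{k-1}$ is log-concave on $(0,1)$ with a unique maximum at $y=x_c:=(n-k)/(n-1)$, which coincides with $e^{-kt_{n,k}}$; so $t<t_{n,k}$ corresponds to $x>x_c$ and $t>t_{n,k}$ to $x<x_c$. Writing $\log\phi(y)=\log\phi(x_c)-(n-1)D(x_c\|y)$, where $D(p\|q)=p\log(p/q)+(1-p)\log((1-p)/(1-q))$ is the Kullback--Leibler divergence, expresses the departure from the peak in a convenient form.

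My first step would be to show $\binom nk\phi(x_c)=\Theta(q)$ via Stirling. A clean route is to note $\binom nk\phi(y)=(n/k)\P(\mathrm{Binom}(n-1,y)=n-k)$ and apply the local central limit theorem at $y=x_c$ (where $(n-1)x_c=n-k$ is the mean), which yields $\binom nk\phi(x_c)\sim q/\sqrt{2\pi}$. Combined with the identity above, this gives the pointwise bound $\binom nk\phi(y)\le Cq\exp\{-c(n-1)D(x_c\|y)\}$.

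Next I would bound the two terms in the formula of Theorem~\ref{T:exact} separately. The boundary term $\tfrac1k\binom nk x\phi(x)$ is immediately controlled by the pointwise bound above. For the integral I would invoke the classical Beta--Binomial CDF identity $\binom nk\int_x^1\phi(y)\,dy=\tfrac1k\P(\mathrm{Binom}(n,x)\le n-k)$: when $x>x_c$ this is a lower tail, bounded by Chernoff via $\exp\{-cnD(x_c\|x)\}$, while when $x<x_c$ the quantity $\tfrac1k-\binom nk\int_x^1\phi(y)\,dy=\tfrac1k\P(\mathrm{Binom}(n,x)\ge n-k+1)$ is an upper tail, bounded similarly.

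Finally I would translate deviations in $y$ to deviations in $t$: for $y=e^{-ks}$ near $x_c$, a Taylor expansion gives $y-x_c\approx -kx_c(s-t_{n,k})$, so $(n-1)D(x_c\|y)\gtrsim (n-k)k^2|s-t_{n,k}|^2$ whenever $|s-t_{n,k}|k$ is bounded, with a uniform lower bound of order $(n-k)$ once $|s-t_{n,k}|k\gtrsim 1$; this produces the $\min\{|t-t_{n,k}|^2k^2,1\}$ in the advertised exponent. The main technical obstacle lies in the near-critical regime $|t-t_{n,k}|k\sqrt{n-k}\lesssim 1$, where both terms are genuinely of order $q$: there the bound must degenerate to the trivial $Cq$, which is precisely why the polynomial prefactor $q=n^{3/2}k^{-3/2}(n-k)^{-1/2}$ appears and cannot be improved without exploiting cancellations between the boundary and integral terms.
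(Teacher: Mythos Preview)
Your proposal is correct and structurally parallel to the paper's own proof: both split $\E(s_k(t))$ into the boundary term $\tfrac1k\binom nk x\phi(x)$ and the integral term, locate the unique peak of $\phi$ at $y_{n,k}=(n-k)/(n-1)$, show via Stirling that $\binom nk\phi(y_{n,k})\asymp q$, control the decay away from the peak by a quadratic lower bound on $-\log(\phi/\phi(y_{n,k}))$, and finally convert $x$-deviations into $t$-deviations through $x=e^{-kt}$.

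The difference is only in packaging. The paper works directly with $\psi_{n,k}(y)=\log\phi(y)$, Taylor-expands $\log(1+\eps)$ to get $\psi(y_{n,k}(1+\eps))\le\psi(y_{n,k})-\tfrac14(n-k)\min\{|\eps|,\tfrac14\}^2$, and bounds the integral crudely by $\int_x^1\phi\le\phi(x)$ (respectively $\int_0^x\phi\le\phi(x)$) using monotonicity of $\psi$ on either side of the peak. You encode the same quadratic decay via the identity $\log\phi(y)=\log\phi(x_c)-(n-1)D(x_c\|y)$, invoke the local CLT for the Stirling step, and replace the crude integral bound by the Beta--Binomial identity $\binom nk\int_x^1\phi=\tfrac1k\P(\mathrm{Binom}(n,x)\le n-k)$ followed by Chernoff. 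This last move actually gives a sharper prefactor ($1/k$ rather than $q$) on the integral term, but the gain is moot since the boundary term carries the $q$ regardless. Both routes are equally valid; yours is a little more probabilistic in flavor, the paper's a little more elementary.
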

\noindent(Here and below $c$ and $C$ pertain to
  unspecified positive universal constants, possibly different from
  one place to another).

\begin{figure}[t]
\begin{center}
  \includegraphics{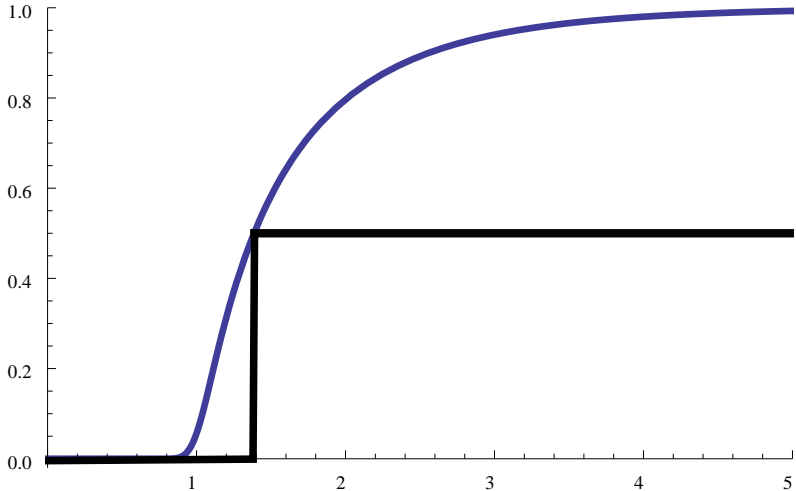}
\end{center}
  \caption{Approximate plots of $ (n^2/k) \E(s_k(t/n))$,  
     together with the relative size of the giant component, for $n = 200$ and $k=100$. The scaling factor in front is chosen so that if $k /n \to \alpha$ then the limiting step function takes the values 0 and $\alpha$.}
  \label{fig}
\end{figure}

At first sight it might seem as if Theorem \ref{T:trans} cannot
possibly hold. After all, a large cycle of $\sigma_t$ (say of size $\frac 13 n$)
is necessarily contained in the giant component of a corresponding
Erd\H{o}s-R\'enyi graph $G(n,t)$ with edge density $t$ (see
e.g.\ \cite{Schramm}). It is easy to check that $t_{n,k}$ is the first time that the giant component has a relative size which exceeds $1/3$, so it is clear that $\E(s_k(t))$ must be close to 0 before $t_{n,k}$. What Theorem \ref{T:trans} says is that $\E(s_k(t))$ suddenly reaches its equilibrium value precisely at that time, and does not change afterward. Note however that as $t$ increases above $t_{n,k}$, the cluster continues to
grow. (See Figure \ref{fig}). So how come the probability for a cycle of size exactly $\frac
13 n$ does not grow with it? The answer lies in examining a completely
random permutation. The expected number of cycles of length $k$ in a
random permutation of $n$ elements is exactly $\nicefrac 1k$ and hence
{\em does not depend on $n$}, of course, as long as $n\ge k$. This
explains why the size of the giant component does not affect
$\E(s_k)$, as long as it is bigger than $k$.

The proof of Theorem \ref{T:trans} follows by estimating the terms in
Theorem \ref{T:exact} carefully. Let us just remark how the phase
transition comes about. The integrand in Theorem \ref{T:exact}
(i.e.\ $\phi$) has a peak at exactly the critical value $t_{n,k}$. So
in the subcritical case (i.e.\ $t$ small so $x$ large), $\int_x^1\phi$
can be estimated simply by $\phi(x)$, which is small. In the supercritical case (i.e.\ $x$ small), the integral
is estimated by writing it as $\int_0^1-\int_0^x$. The
integral from $0$ to $1$ is easy to calculate, and gives the
$\nicefrac 1k$ term, and the integral from 0 to $x$ is the error term.

From this we can derive a new proof of the result, first proved by Schramm \cite{Schramm}, that giant cycles (of macroscopic size) emerge at time $1/n$ with high probability asymptotically as $n \to \infty$.

\begin{thm}
  \label{T:gc}
  Let $c>1$ and let $t=c/n$. Let $C(t)$ denote the length of the largest cycle of $\sigma_t$. Then as $\eps \to 0$,
  $$
  \lim_{\eps \to 0} \liminf_{n \to \infty }\P( C(t) > n \eps ) =1.
  $$
\end{thm}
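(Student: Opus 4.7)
Fix $c > 1$, set $t = c/n$, and let $\alpha^* \in (0,1)$ be the positive solution of $\alpha^* = 1 - e^{-c\alpha^*}$, the asymptotic relative size of the giant component of $G(n, c/n)$. Define $L_\eps := \sum_{k > \eps n} k\, s_k(t)$, so that $\{C(t) > \eps n\} = \{L_\eps > 0\}$. The plan is to bracket $L_\eps$ between a first-moment lower bound coming from Theorem \ref{T:trans} and the deterministic upper bound $L_\eps \le |G_{\max}(t)|$ coming from a coupling with an Erd\H os--R\'enyi graph, then turn the gap into concentration via Markov's inequality.

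\emph{First moment.} Fix $\eta > 0$ small. Expanding $t_{n,k} = (1/k)\log((n-1)/(n-k))$ gives $t - t_{n,k} \ge c_1(\eta)/n$ uniformly in $k \in [\eps n,(\alpha^*-\eta)n]$ for some $c_1(\eta) > 0$, hence $(n-k)|t-t_{n,k}|^2 k^2 \ge c_2(\eta)\,\eps^2 n$. The exponential factor in Theorem \ref{T:trans} is therefore at most $\exp(-c_2(\eta)\,\eps^2 n)$, which beats the polynomial prefactor $q$ once $n$ is large, yielding $\E(s_k(t)) = (1/k)(1+o(1))$ uniformly for $k$ in this range. Summing and then letting $\eta = 1/\log n$ gives
\[
\E L_\eps \;\ge\; (\alpha^*-\eps)\,n\,(1-o(1)).
\]

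\emph{Coupling and Markov.} Let $G_t$ be the graph on $\{1,\dots,n\}$ whose edges are those used at least once by time $t$; then $G_t \sim G(n, 1-e^{-t}) \approx G(n, c/n)$, and every cycle of $\sigma_t$ sits inside a connected component of $G_t$. Classical Erd\H os--R\'enyi results supply (i) the second-largest component has size $O(\log n)$ on an event $E$ with $\P(E) = 1 - o(1)$, so that $L_\eps \le |G_{\max}(t)|$ on $E$, and (ii) $|G_{\max}(t)|/n \to \alpha^*$ in $L^1$, whence $\E|G_{\max}(t)| = \alpha^* n + o(n)$ and $|G_{\max}(t)| \ge (\alpha^*-\delta)n$ w.h.p.\ for any fixed $\delta > 0$. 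The nonnegative variable $Y := (|G_{\max}(t)| - L_\eps)^+$ therefore satisfies
\[
\E Y \;\le\; \bigl(\E|G_{\max}(t)| - \E L_\eps\bigr) + n\,\P(E^c) \;\le\; \eps n + o(n).
\]
Markov gives $\P(Y > \sqrt\eps\, n) \le \sqrt\eps + o(1)$. On the intersection $E \cap \{Y \le \sqrt\eps\,n\} \cap \{|G_{\max}(t)| \ge (\alpha^*-\sqrt\eps)n\}$ we deduce $L_\eps \ge (\alpha^*-2\sqrt\eps)n > 0$ for $\eps$ small; this intersection has probability at least $1 - \sqrt\eps - o(1)$. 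Hence $\liminf_n \P(C(t) > \eps n) \ge 1 - \sqrt\eps$, and letting $\eps \to 0$ proves the theorem.

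\textbf{Main obstacle.} The pointwise bound of Theorem \ref{T:trans} is not summable in the critical window $|k - \alpha^* n| \lesssim \sqrt n$, where the prefactor $q$ dominates. We finesse this by applying Theorem \ref{T:trans} only for $k$ safely below $\alpha^* n$, using $\E s_k \ge 0$ in the window to retain a clean lower bound on $\E L_\eps$; the complementary upper bound $\E L_\eps \le \E|G_{\max}(t)| + o(n)$ is supplied by the random-graph coupling rather than by Theorem \ref{T:trans} itself.
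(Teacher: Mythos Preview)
Your proposal is correct and follows essentially the same route as the paper: lower-bound the expected mass of cycles larger than $\eps n$ via Theorem~\ref{T:trans}, upper-bound it via the Erd\H os--R\'enyi coupling, and convert the resulting $O(\eps n)$ gap into concentration by Markov. The only cosmetic differences are that the paper compares $X_n(\eps)=\frac1n L_\eps$ to the deterministic constant $\theta(c)+\eps$ rather than to the random $|G_{\max}|$, and fixes $\alpha<\theta(c)$ then lets $\alpha\to\theta(c)$ after $n\to\infty$ rather than taking $\eta=1/\log n$; your diagonal choice works but you should note that $c_1(\eta)\asymp\eta$ so $c_2(\eta)\,\eps^2 n\asymp n/(\log n)^2\to\infty$, which you left implicit.
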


Schramm's results are stronger and also give information about the
{\em joint} distribution of cycles, e.g.\ what is the probability that
the largest cycle is in $[0.6n,0.7n]$ and the second largest is in
$[0.1n,0.2n]$. We will not discuss the joint distribution in this paper.

Theorem \ref{T:gc} is the mean-field case of a well-known conjecture
of B\'alint T\'oth \cite{toth} that the cycle structure of the
interchange process on the graph $\mathbb{Z}^d, d \ge 3$, exhibits a
phase-transition with infinite cycles appearing at some finite time
(see e.g. \cite{toth} for definitions). This paper was initially
motivated by the desire to see if it would be possible to use a
representation-theoretic approach to this conjecture. Our proof of
Theorem \ref{T:gc} can be seen as the mean-field case of this
programme (see also Theorem 1 of \cite{NB} for a possible alternative
route, which yields a slightly weaker conclusion).  Let us remark that
the recent resolution of Aldous' spectral gap conjecture
\cite{CLR} could aid one in an algebraic attack on the non-mean-field
case.

Interestingly, the asymptotics leading to the transition in Theorems \ref{T:trans} and \ref{T:gc} do \emph{not} come from the contribution of a single representation of $S_n$ with all other contributions being negligible. Rather, it is the result of some very remarkable cancelations when summing up over all representations of $S_n$. These cancelations lead to the simple formula of Theorem \ref{T:exact}. Here it is essential to have an exact computation that keeps track of \emph{all} representation in order to establish the emergence of giant cycles of Theorem \ref{T:gc}.  Nevertheless, if one wants only weaker results, e.g.\ that whenever
$t>100 t_{n,k}$ we already have $|\E(s_k(t))-\nicefrac 1k|$
exponentially small, then in this case a single representation (the
trivial one, $[n]$) gives the main contribution, with the contribution
of all the others combined negligible. In other words, ignoring the
cancelation between representations will cause the information about
the sharpness of the phase transition to be lost, but one may still
show the existence of an ``ordered phase''.

Finally, we show how our formula allows us to recover a result of Berestycki and Durrett \cite{BD} concerning a slowdown transition for the interchange process. Let $d(t)$ denote the distance between $\sigma_t$ and $\sigma_0$, i.e., the minimal number of transpositions needed to write $\sigma_t$ as a product of transpositions. (If one views $\sigma_t$ as a random walk on the Cayley graph of $S_n$ generated by the set of transpositions, then $d(t)$ is the graph distance between $\sigma_t$ and $\sigma_0$).

\begin{thm}
  \label{T:slowdown}
  Let $c>0$ and let $t = c/n$. Then as $n \to \infty$,
  \begin{equation}\label{E:slowdown}
  \frac1n\E(d(t)) \to u(c): = 1- \sum_{k=1}^\infty \frac{k^{k-2}}{k!} \frac1c (ce^{-c})^k.
\end{equation}
\end{thm}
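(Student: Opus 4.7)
The plan is to combine Theorem~\ref{T:exact} with the elementary identity $d(\sigma) = n - C(\sigma)$, where $C(\sigma)$ is the total number of cycles of $\sigma$ (including fixed points). This identity holds because a $k$-cycle factors into exactly $k-1$ transpositions, minimally. Summing over cycles gives $\E(d(t)) = n - \sum_{k=1}^n \E(s_k(t))$, so the theorem reduces to showing
\[
\frac{1}{n}\sum_{k=1}^{n} \E(s_k(t)) \longrightarrow \sum_{k=1}^{\infty} \frac{k^{k-2}}{k!}\,\frac{(ce^{-c})^k}{c}
\]
as $n\to\infty$ with $t=c/n$.

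For each fixed $k$, I would evaluate the limit of $n^{-1}\E(s_k(t))$ using Theorem~\ref{T:exact}. With $x=e^{-ck/n}\to 1$, the elementary approximations $x^{n-k+1}\to e^{-ck}$, $(1-x)^{k-1}\sim (ck/n)^{k-1}$, and $\binom{n}{k}\sim n^k/k!$ combine to give
\[
\frac{1}{n}\binom{n}{k}\cdot\frac{1}{k}x\phi(x) = \frac{1}{n}\binom{n}{k}\cdot\frac{1}{k}x^{n-k+1}(1-x)^{k-1} \longrightarrow \frac{k^{k-2}}{k!}\cdot\frac{(ce^{-c})^k}{c},
\]
which is exactly the $k$-th term of the target series. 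For the integral term, the substitution $y=1-u/n$ transforms it into
\[
\binom{n}{k}\int_x^1 \phi(y)\,dy \longrightarrow \frac{1}{k!}\int_0^{ck} u^{k-1}e^{-u}\,du,
\]
a quantity of order $1$ in $n$, whose contribution vanishes after division by $n$. Thus the boundary term $\frac{1}{k}x\phi(x)$ carries all of the $n$-scaling and the integral term is negligible for the present theorem.

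The passage from pointwise convergence for each $k$ to convergence of the full sum is handled by the conservation law $\sum_{k=1}^n k\,s_k(t)=n$, which yields the uniform tail bound
\[
\frac{1}{n}\sum_{k=K+1}^{n} \E(s_k(t)) \le \frac{1}{K+1}.
\]
Inserting this into the obvious sandwich gives
\[
\sum_{k=1}^{K}\frac{k^{k-2}}{k!}\frac{(ce^{-c})^k}{c} \le \liminf_n \frac{1}{n}\sum_{k=1}^n \E(s_k(t)) \le \limsup_n \frac{1}{n}\sum_{k=1}^n \E(s_k(t)) \le \sum_{k=1}^{K}\frac{k^{k-2}}{k!}\frac{(ce^{-c})^k}{c} + \frac{1}{K+1},
\]
and letting $K\to\infty$ completes the argument. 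The proof is essentially routine given Theorem~\ref{T:exact}; the mildest subtlety is recognizing that the boundary term governs the asymptotics, and no fine control of large cycles (as in Theorem~\ref{T:trans}) is required because the conservation law $\sum_k k s_k = n$ provides a crisp tail estimate uniform in $n$.
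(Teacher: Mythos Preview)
Your proof is correct and follows essentially the same route as the paper: both reduce via $d(t)=n-N(t)$, compute the pointwise limit $\frac{1}{n}\E(s_k(t))\to \frac{k^{k-2}}{k!}\frac{(ce^{-c})^k}{c}$ from Theorem~\ref{T:exact} by the same elementary asymptotics, and control the tail by the observation that there are at most $n/K$ cycles of size exceeding $K$ (your conservation law $\sum_k k s_k=n$ is exactly this). The only cosmetic difference is that you identify the limit of the integral term explicitly via the substitution $y=1-u/n$, whereas the paper simply bounds $\frac{1}{n}\binom{n}{k}\int_x^1\phi\le C(1-x)\to 0$; both arguments show this term is $O(1)$ and hence negligible after division by $n$.
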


The asymptotic behaviour \eqref{E:slowdown} was first obtained in Theorem 3 of \cite{BD}. It is known that for $c<1$, the function $u(c)$ simplifies to $u(c) = c/2$ while for $c>1$, $u(c) < c/2$. The function $u$ is $C^\infty$ everywhere except at $c=1$, for which $u''(1) = - \infty$. This result is indicative of a slowdown transition, in the following sense: so long as $c<1$, the random walk escapes from the origin at maximum speed (i.e., all but a negligible fraction of moves take it away from the origin), while for $c>1$ the random walk starts decelerating brutally --- indeed, the acceleration is $-\infty$ at $c=1$. The methods of \cite{BD} relied heavily on the use of an associated Erd\H{o}s-Renyi random graph process, and more precise results were obtained (e.g., a central limit theorem for $d(t)$). However we will see that the formula \eqref{E:slowdown} is obtained with essentially no effort from our Theorem \ref{T:exact}.

It is interesting to note that Theorem \ref{T:slowdown} is derived by
applying Theorem \ref{T:exact} for constant $k$, unlike Theorems
\ref{T:trans} and \ref{T:gc} for which the relevant regime is
$k\asymp n$. It would be interesting to find applications of our
formula in other regimes of $k$, for example $k=n-o(n)$.

\section{Proofs}

\subsection{Preliminaries}

For a permutation $\sigma$, let $\alpha_k(\sigma)$ denote the number of cycles of size $k$ of $\sigma$, and $s_k(t) = \alpha_k(\sigma_t)$. For a representation $\rho:S_n \to GL(\mathbb{C}^{\dim \rho})$ of $S_n$, let $\chi_\rho$ be its character, i.e., $\chi_\rho(\sigma) = \Tr( \rho(\sigma)).$
Note that $\alpha_k( \sigma)$ is a class function. Hence, since characters form an orthonormal basis of class functions (see, e.g., Proposition 1.10.2 in \cite{sagan}) there exists $a_\rho \in \C$ such that
\begin{equation}\label{D:arho}
\alpha_k(\sigma) = \sum_\rho a_\rho \chi_\rho(\sigma)
\end{equation}
where the sum is over irreducible representations $\rho$.
In \cite{AlonKozma} the value of $a_\rho$ for all representations
$\rho$ was worked out. To describe this result we need to recall some
standard background in the representation theory of the symmetric group. The irreducible
representations of $S_n$ are parametrized by Young diagrams of size
$n$, that is, sequences $\lambda = [\lambda_1, \ldots, \lambda_j]$
with $\lambda_1 \ge \cdots \ge \lambda_j \ge 1$ and $\sum_i \lambda_i
= n$, which may be thought of as a collection of boxes sitting on top
of one another, with $\lambda_i$ boxes in row $i$. See \cite{sagan}
for the details of this parameterization (we will not use it directly in this paper).

As it turns out, there are very few Young diagrams for which the
corresponding coefficient $a_\rho$ defined by \eqref{D:arho} is
nonzero. To state things as simply as possible, we restrict ourselves
to the case where $k \le n/2 $ (the other case is similar and we come back to this later).
Consider the representation $\rho_i = [n-k, k-i, 1^i]$ where $1^i$ indicates that there are $i$ rows with exactly one box. Then Theorem 3 of \cite{AlonKozma} states that $a_\rho$ is zero unless $\rho=\rho_i$ for some $0\le i \le k-1$, in which case
$a_\rho$ is simply equal to $(-1)^i/k$, or $ \rho = [n]$ the trivial representation, in which case $a_\rho = 1/k$.

\begin{lemma}
  \label{L:repres}
For a representation $\rho$, let $r(\rho) = \chi_\rho(\tau)/d_\rho$ be
the character ratio of $\rho$, i.e., when $\tau$ is any transposition,
$\chi_\rho(\tau) = \Tr(\rho(\tau))$ is the character of $\rho$ at $\tau$, and $d_\rho$ is the dimension of $\rho$. Then
\begin{equation}\label{E(sk)}
  \E(s_k(t)) = \sum_\rho a_\rho d_\rho \exp\bigg\{ {n \choose 2} (r(\rho)-1) t\bigg\}.
\end{equation}
\end{lemma}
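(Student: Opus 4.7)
The plan is to combine the character decomposition from \eqref{D:arho} with a standard Fourier/spectral computation on $S_n$: irreducible characters are eigenfunctions of the generator of the interchange process.

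First I would apply linearity of expectation to \eqref{D:arho}, writing
\[
  \E(s_k(t)) = \E(\alpha_k(\sigma_t)) = \sum_\rho a_\rho \,\E(\chi_\rho(\sigma_t)),
\]
so it suffices to evaluate $\E(\chi_\rho(\sigma_t))$ for each irreducible $\rho$. Since $\sigma_t$ is obtained by performing transpositions at rate one each (the factor $\binom{n}{2}$ makes this so), the infinitesimal generator acting on class functions $f$ is $Lf(\sigma) = \sum_\tau [f(\tau\sigma) - f(\sigma)]$, where the sum runs over all transpositions.

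The key step is to show that each $\chi_\rho$ is an eigenfunction of $L$. Consider the element $T = \sum_\tau \tau$ of the group algebra $\C[S_n]$; since the set of transpositions is a single conjugacy class, $T$ lies in the center, so by Schur's lemma $\rho(T)$ acts on the representation space as a scalar multiple of the identity. Taking traces, that scalar is $\chi_\rho(\tau)/d_\rho \cdot \binom{n}{2} = \binom{n}{2} r(\rho)$. Therefore
\[
  \sum_\tau \chi_\rho(\tau \sigma) = \Tr\bigl(\rho(T)\rho(\sigma)\bigr) = \binom{n}{2} r(\rho)\, \chi_\rho(\sigma),
\]
and hence $L\chi_\rho = \binom{n}{2}(r(\rho)-1)\chi_\rho$.

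Solving the forward Kolmogorov equation $\partial_t \E(\chi_\rho(\sigma_t)) = L\E(\chi_\rho(\sigma_t))$ with initial condition $\sigma_0 = e$ (so that $\chi_\rho(\sigma_0) = \chi_\rho(e) = d_\rho$) gives
\[
  \E(\chi_\rho(\sigma_t)) = d_\rho \exp\!\left\{\binom{n}{2}(r(\rho)-1)\,t\right\}.
\]
Substituting back into the character expansion yields \eqref{E(sk)}. The only delicate point is justifying that termwise expectation is legitimate, but as the sum in \eqref{D:arho} is a finite sum over irreducible representations of $S_n$, this is automatic. I do not expect a substantive obstacle: the whole argument is routine noncommutative Fourier analysis once one invokes the Alon--Kozma identity.
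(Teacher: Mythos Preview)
Your argument is correct and follows essentially the same route as the paper: both use Schur's lemma on the central class sum $\sum_\tau \tau$ to extract the eigenvalue $\binom{n}{2}(r(\rho)-1)$, and both exploit the initial condition $\chi_\rho(\mathrm{id})=d_\rho$. The only cosmetic difference is packaging---the paper computes the Fourier transform $\widehat f(\rho)=e^{t\widehat Q(\rho)}$ of the heat kernel and takes its trace, whereas you phrase the same computation as ``$\chi_\rho$ is an eigenfunction of the generator, solve the ODE''---but the substance is identical.
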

\begin{proof}
To compute $s_k(t)$, let $Q$ be the infinitesimal generator of the random walk. That is, viewing $Q$ as an element of the group algebra $\mathbb{C}[S_n]$, $Q = \sum_{i<j} [(i,j) - \text{id}]$ (here, $(i,j)$ denotes the transposition of $i$ and $j$ and $\text{id}$ is the identity permutation). Thus, $\P(\sigma_t = s) = e^{tQ}(o, s) = f(s),$ say. Then
\begin{align}
  \E(s_k(t)) & = \sum_{s\in S_n} \alpha_k(s) \P(\sigma_t = s) \nonumber \\
  & = \sum_\rho \sum_{s \in S_n} a_\rho \chi_\rho(s) f(s) \nonumber \\
  & = \sum_\rho a_\rho \Tr\left(\sum_{s \in S_n} \rho(s) f(s)\right) \nonumber \\
  & = \sum_\rho a_\rho \Tr \widehat f(\rho), \label{skfourier}
\end{align}
where $\widehat f(\rho)$ denotes the Fourier transform of $f$ at $\rho$. Note that $f(s) = \sum_{k=0}^\infty \frac{1}{k!}t^k Q^{k}(s) $.
Since convolutions become (matrix) products after taking Fourier transform, we get
\[
\widehat f(\rho) = \sum_{k=0}^\infty \frac{t^k}{k!} \widehat Q(\rho)^k
 = \exp( t \widehat Q(\rho))
\]
where the second $\exp$ is a matrix exponential. Now note that
$Q(s)$ is constant on conjugacy classes, i.e., is a class function (here is where we use that we do the interchange process on the complete graph, and not on any other graph). It easily follows from Schur's lemma that $\widehat Q(\rho) = \lambda_\rho I$ for some $\lambda_\rho \in \R$, and thus we conclude that 
\begin{equation}\label{Fourier_f}
\widehat f(\rho) = e^{t\lambda_\rho}I .
\end{equation}
To obtain the constant $\lambda_\rho$ we proceed as follows: on the one hand we have $\Tr \widehat Q(\rho)=d_\rho\lambda_\rho$, so $\lambda_\rho = (1/d_\rho) \Tr \widehat Q(\rho)$. On the other hand, $Q=\sum_{i<j}[(ij)-\text{id}]$ so by linearity of the trace,
\[
\Tr \widehat Q(\rho)=\sum_{i<j}\chi_\rho((ij))-\chi_\rho(\text{id})={n \choose 2}(\chi_\rho(\tau)-d_\rho),
\]
where $\chi_\rho(\tau)$ denotes the character of $\tau$ evaluated at an arbitrary transposition (again, it is not important which one, since characters are class functions). Hence, dividing by $d_\rho$, we get 
\begin{equation}\label{lambda_rho}
\lambda_\rho = { n \choose 2} (r(\rho)-1)
\end{equation}
where $r(\rho) = \chi_\rho(\tau)/ d_\rho$ is the character ratio at a transposition.
Combining \eqref{skfourier}, \eqref{Fourier_f} and \eqref{lambda_rho}, we have arrived at
$$
  \E(s_k(t)) = \sum_\rho a_\rho d_\rho \exp\bigg\{ {n \choose 2} (r(\rho)-1) t\bigg\}
$$
as needed.
\end{proof}

\subsection{Proof of Theorem \ref{T:exact}, case \texorpdfstring{$k
    \le n /2 $}{k less than n/2}.}

We now explain how to compute the various terms in the sum involved in \eqref{E(sk)} in order to get to the exact formula of Theorem \ref{T:exact}. We assume for now $k \le n/2$.

\begin{proof}
We start by recalling Frobenius's formula, which gives the values of the character ratios for any representation. Suppose $\rho$ is parametrized by the Young tableau
$(\rho_1, \ldots, \rho_j)$, then (see \cite[Equation $(D-2)$, p.\ 40]{Diaconis} or \cite[Lemma 7]{DiaconisSh}),
\begin{equation}
\label{Frobenius}
r(\rho) = \frac1{n(n-1)} \sum_{i=1}^j \rho_i^2 - (2i-1) \rho_i.
\end{equation}
Thus if $\rho= \rho_i$ is the representation given by $\rho_i = [n-k, k-i, 1^i]$, $0\le i \le k-1$, then
\begin{align}
  n(n-1) r(\rho_i) &= (n-k)^2 - (n-k) + (k-i)^2 - 3(k-i) \nonumber \\
  &  \ \ \ +1 - 5 + \ldots + 1 - (2(i+2) -1)\nonumber \\
  & =  (n-k)^2 - (n-k) + k^2 - 2i k + i^2  - 3k + 3i  - (i^2 + 3i)\nonumber \\
  & = (n-k)^2 - n - 2k + k^2  - 2ik.\label{ratio_value}
\end{align}
Notice the cancelation of the terms in $i^2$, which is crucial for the following calculations.

\begin{figure}
\input{hook.pstex_t}
\caption{Sizes of the hooks for the diagram $[n-k,k-i,1^i]$.}\label{fig:hook}
\end{figure}
By the hook-length formula (see e.g. (4.12) in \cite{FultonHarris}),
it is also easy to compute $d_{\rho_i}$, the dimension of the
representation $\rho_i$ (see Figure \ref{fig:hook}):
\begin{equation}
  \text{dim}(\rho_i) = \frac{n! (n-2k+i+1)}{i!  k (n-k)! (k-i-1)!(n-k+i+1)}.\label{dimrho_i}
\end{equation}

We now plug these various expressions into \eqref{E(sk)}. Recall that
$a_\rho = (-1)^i/k$ when $\rho = \rho_i$, and $a_\rho = 1/k$ when $
\rho$ is the trivial representation $[n]$. We obtain the following
expression, after rearranging the terms (mostly to isolate the
$i$-independent terms and take them out of the sum). Let $m = k-1$,
and $x = \exp( - kt)$. Then
\begin{align}
  \E(s_k(t)) & = \frac1k + C_{n,k} \sum_{i=0}^m \frac{m! }{i! (m-i)!} \frac{n-2k+i+1}{n-k+i+1} (-1)^i  x^i  \label{E(sk)2}
\end{align}
where
\begin{align}
C_{n,k} &= \frac1k \frac{n!}{k! (n-k)!} \exp\left\{ t {n \choose 2}\left( \frac{ (n-k)^2 - n - 2k + k^2}{n(n-1)}-1\right)\right\} \nonumber\\
&= \frac1k {n \choose k} \exp\left\{ \frac{t}2 \left( n^2 - 2nk + k^2 - n - 2k + k^2\right) - t\frac{n^2 - n}2 \right\} \nonumber \\
& = \frac1k {n \choose k} \exp\left\{ - t k (n+1 - k)\right\}
=\frac 1k \binom{n}{k}x^{n-m}.\label{Cnk}
\end{align}
To compute the right-hand side of \eqref{E(sk)2}, note that $\sum_{i=0}^m \frac{m! }{i! (m-i)!} (-1)^i  x^i = (1- x)^m$, hence
\begin{align*}
\E(s_k(t)) & = \frac1k + C_{n,k} \sum_{i=0}^m \frac{m! }{i! (m-i)!} \frac{n-2k+i+1}{n-k+i+1} (-1)^i  x^i  \\
& = \frac1k + C_{n,k} \sum_{i=0}^m {m \choose i} (-x)^i \left( 1- \frac{k}{n-k+i+1}\right)\\
& = \frac1k + C_{n,k} (1- x)^m - C_{n,k} k \sum_{i=0}^m { m \choose i}  (-x)^i \frac1{n-m+i}\\
& = \frac1k + A_1 - A_2,
\end{align*}
say.

This can be simplified a little bit. Observe indeed that
\begin{align*}
  A_2 & = kC_{n,k}\sum_{i=0}^m { m \choose i}  (-x)^i \frac1{n-m+i} \\
  & = k C_{n,k} x^{ - (n - m)} \sum_{i=0}^m {m \choose i}(-1)^i \frac{x^{n-m+i}}{n-m+i }\\
  & = {n \choose k} \sum_{i=0}^m {m \choose i}(-1)^i \int_0^x y^{n-m+i-1} dy\\
  & = {n \choose k} \int_0^x y^{n-k} (1-y)^m dy
\end{align*}
On the other hand, $A_1 = \frac1k { n \choose k} (1-x)^m x^{n-m}$, so
we are led to the following exact expression for $\E(s_k(t))$: with $x
= e^{-t k}$,
\[
  \E(s_k(t)) = \frac 1k+\binom nk \left[ \frac1k x^{n-m}(1-x)^{m}-\int_0^xy^{n-k}(1-y)^m\,dy\right].
\]
Recalling the notation $\phi$ from the statement of the theorem, we
can rewrite this shortly as
\begin{equation}\label{E:from0}
\E(s_k(t))=\frac 1k + \binom nk \left[\frac 1k
  x\phi(x)-\int_0^x\phi(y)\,dy\right].
\end{equation}
The theorem is basically finished: we only need the following simple
transformation:
\begin{lemma}\label{lem:0xx1}
\[
\binom nk \int_0^x\phi(y)\,dy=\frac 1k - \binom nk
\int_x^1\phi(y)\,dy
\]
\end{lemma}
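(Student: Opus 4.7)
The plan is to observe that the claim is simply the splitting identity $\int_0^1 = \int_0^x + \int_x^1$ combined with the exact evaluation of the full integral $\int_0^1 \phi(y)\,dy$. So the whole content of the lemma boils down to showing
\[
\binom{n}{k}\int_0^1 y^{n-k}(1-y)^{k-1}\,dy = \frac{1}{k}.
\]

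The integral on the left is the Beta function $B(n-k+1,\,k)$, which equals $\Gamma(n-k+1)\Gamma(k)/\Gamma(n+1) = (n-k)!\,(k-1)!/n!$. Multiplying by $\binom{n}{k} = n!/(k!(n-k)!)$ gives $(k-1)!/k! = 1/k$, which is exactly what is needed. If one prefers to avoid invoking the Beta function by name, the same identity can be obtained by $k-1$ successive integrations by parts (each one shifting a factor of $1-y$ into the power of $y$), which is a short induction on $k$.

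Given this, the proof of the lemma is a one-line manipulation: writing $\int_0^x\phi = \int_0^1\phi - \int_x^1\phi$ and substituting the value $\binom{n}{k}\int_0^1\phi = 1/k$ just computed yields
\[
\binom{n}{k}\int_0^x\phi(y)\,dy = \frac{1}{k} - \binom{n}{k}\int_x^1\phi(y)\,dy,
\]
as claimed.

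There is no real obstacle here; the only thing to be careful about is matching the exponents in $\phi(y) = y^{n-k}(1-y)^{k-1}$ with the parameters of the Beta function (i.e., remembering the shift by $1$ in each argument). Once combined with \eqref{E:from0}, this immediately rewrites $\mathbb{E}(s_k(t))$ as $\binom{n}{k}[\tfrac{1}{k}x\phi(x) + \int_x^1\phi(y)\,dy]$, completing the proof of Theorem \ref{T:exact} in the case $k \le n/2$.
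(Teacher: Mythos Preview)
Your proof is correct and follows essentially the same route as the paper: split $\int_0^1=\int_0^x+\int_x^1$ and evaluate $\binom{n}{k}\int_0^1\phi$ via the Beta integral $B(n-k+1,k)=\Gamma(n-k+1)\Gamma(k)/\Gamma(n+1)$ to obtain $1/k$. The only cosmetic difference is that the paper includes a short self-contained derivation of the Beta identity (via Fubini and the change of variables $z=u+v$, $t=u/z$), whereas you cite it directly and offer integration by parts as an alternative.
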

\begin{proof}
Denote
\[
I(x)=\int_x^1\phi(y)=\int_0^1\phi(y)\,dy-\int_0^x\phi(y)\,dy.
\]
The integral from 0 to 1 is just the Beta integral, so
\[
\int_0^1 y^{n-k}(1-y)^{k-1} dy = \frac{\Gamma(n-k+1) \Gamma(k)}{\Gamma(n-k+1 + k)}.
\]
For completeness, here is a short
proof: Fubini's theorem shows that for every $x,y >0$,
$\Gamma(x) \Gamma(y) = \int_0^\infty \int_0^\infty e^{-u -v} u^{x-1} v^{y-1} \,du\, dv$, hence after a change of variables $z = u + v$, $t = u/z$, so that $du\, dv = z\, dz\, dt$, we get
\begin{align*}
\Gamma(x) \Gamma(y) &= \int_0^\infty  e^{-z} z^{x+ y - 1} dz \int_0^1 t^{x-1} (1-t)^{y-1} dt .
\end{align*}
Hence $ \int_0^1 t^{x-1} (1-t)^{y-1} dt  = \Gamma(x) \Gamma(y)
/\Gamma(x+y),$ as required.

Multiplying by $\binom nk$ gives the $\frac 1k$, showing the lemma. \end{proof}
Thus the $\frac 1k$ from lemma \ref{lem:0xx1} cancels
with the $\frac1k$ in (\ref{E:from0}) and leads us to
\begin{equation}
  \label{E(sk)simple}
  \E(s_k(t)) = {n \choose k} \left[ \frac1k x^{n-m}(1-x)^{k-1} + I(x)\right]
\end{equation}
(with $I$ as in the proof of lemma \ref{lem:0xx1}). This finishes the proof of Theorem \ref{T:exact} in the case $k \le n/2 $.
\end{proof}


\subsection{Proof of Theorem \ref{T:exact}, case \texorpdfstring{$k >
    n/2$}{k bigger than n/2}.}
The calculation in this case is very similar to the one in the previous case. In fact, we find the similarity eerie and needing of explanation. What we will show is that (\ref{E(sk)2}) holds, which means that all the calculation from (\ref{E(sk)2}) on holds as is, including the conclusion of the theorem. So we only need to show (\ref{E(sk)2}).

Recall from \cite{AlonKozma} that if $k > n/2$, the representations that arise in the character decomposition of $\alpha_k$ are of two types: for $0\le  i \le 2k - n -2$, we have $\rho_i = [k-i-1, n-k+1, 1^i]$, for which $a_i = (-1)^{i+1}/k$, while for $2k - n \le i \le k-1$, we have $\rho_i = [n-k, k-i, 1^i]$ coming with a coefficient $a_i = (-1)^i/k$. Note in particular, that for $2k- n \le i \le k-1$, the representation and the coefficient is formally identical to those used in the case $k \le n/2$, and hence the corresponding dimension and character ratio are still given  (formally) by the same expressions \eqref{ratio_value} and \eqref{dimrho_i}. Note also that there is no representation associated with $i = 2k-n-1$.

For $0 \le i \le 2k - n -2$, consider the representation $\rho_i = [k-i-1, n-k+1, 1^i]$ for which the coefficient is $a_{\rho_i} = (-1)^{i+1}/k$. Then a calculation using Frobenius' character formula shows (still keeping the notation $m=k-1$),
\begin{align*}
  n(n-1) r(\rho_i) &= (m-i)^2 - (m-i) + (n-m)^2 - 3(n-m) \\
  &  \ \ \ +1 - 5 + \ldots + 1 - (2(i+2) -1)\\
  & = (m-i)^2 - (m-i) + (n-m)^2 - 3(n-m) - (i^2 + 3i) \\
  & = -2ik + (n-k)^2 + k^2 -2k - n
\end{align*}
Comparing with \eqref{ratio_value}, observe that this is exactly the same expression.

\begin{figure}
\input{hook2.pstex_t}
\caption{Sizes of the hooks for the diagram $[k-i-1,n-k+1,1^i]$}\label{fig:hook2}
\end{figure}
It remains to compute $d(\rho_i)$. In order to do so we use again the
hook length formula, which gives us the following (see Figure \ref{fig:hook2}):
$$
\dim(\rho_i) = \frac{n! (2k - n - i -1)}{i! k (n-k)! (k-i -1)! (n-k+i+1)}.
$$
Formally, this is once again exactly the same result as in case $k \le n/2$ but with a minus sign. But since in this case $a_i = (-1)^{i+1}/k$ instead of $(-1)^i/k$, the two minus signs cancel out and the formula which computes $\E(s_k(t))$ is \emph{exactly the same}, except for the fact that there is no term corresponding to $i = 2k - n-1$. However, observe that the term in (\ref{E(sk)2}) corresponding to $i=2k-n-1$ is zero, so it does nothing to the sum when we keep it.

This shows that (\ref{E(sk)2}) holds also when $k>n/2$ and we conclude that, even when $k > n/2$,
\begin{equation*}
  \label{E(sk)simple2}
  \E(s_k(t)) = {n \choose k} \left[ \frac1k x^{n-m}(1-x)^{k-1} + I(x)\right].
\end{equation*}
This finishes the proof of Theorem \ref{T:exact}.\qed


\subsection{Proof of Theorem \ref{T:trans}}

We need to estimate the terms appearing in Theorem \ref{T:exact}. For
this we denote $B_1 = \frac1k {n \choose k}  x^{n-k}(1-x)^{k-1}$ and $B_2 = {
  n \choose k}I(x)$ where $I$ is as in the previous section. With these definitions $\E(s_k(t))=xB_1+B_2$ (the reason for taking one $x$ out of $B_1$ is technical ---
it allows us to use the same notations for the estimate of $B_1$
and of $B_2$, nothing more). Another useful notation would be
$$
\psi_{n,k}(y) = (n-k)\log(y) + (k-1) \log(1-y),
$$
so that the $x$-dependent term in $B_1$ is $e^{\psi(x)}$ and the integrand in $I(x)$ is $e^{\psi(y)}$. Note that $\psi_{n,k}$ has a unique maximum over $[0,1]$, attained at $y_{n,k} = (n-k)/(n-1)$. The key lemma is the following estimate:

\begin{lemma}\label{lem:B1}
There exists $C>0$ such that for all $k \le n$, and for all $x \in [0,1]$ we have:
  \begin{equation}
     B_1 \le \frac {Cq}k \exp\left\{ -
     \frac{\min\{|\eps|,\nicefrac 14\}^2 (n-k)}4\right\}\qquad\eps=\frac{x-y_{n,k}}{y_{n,k}},
  \end{equation}
where $q=n^{3/2}k^{-3/2}(n-k)^{-1/2}$, as in the statement of theorem \ref{T:trans}. 
\end{lemma}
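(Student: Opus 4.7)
My plan is to write $B_1 = M \cdot e^{\psi_{n,k}(x) - \psi_{n,k}(y_{n,k})}$, where $M = \frac1k\binom{n}{k} e^{\psi_{n,k}(y_{n,k})}$ is the value of $B_1$ at its unique maximum $x = y_{n,k}$. I will estimate the prefactor $M$ and the exponential tail separately and then multiply.

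For the prefactor, I apply Stirling to $\binom{n}{k}$ and use the closed-form value $e^{\psi_{n,k}(y_{n,k})} = (n-k)^{n-k}(k-1)^{k-1}/(n-1)^{n-1}$. The terms $(n-k)^{n-k}$ cancel exactly with those coming out of $(n-k)!$, and what remains is
\[
M \sim \frac{1}{k}\sqrt{\frac{n}{2\pi k(n-k)}}\cdot\frac{n^n(k-1)^{k-1}}{k^k(n-1)^{n-1}}.
\]
The super-exponential factor collapses: using $n^n/(n-1)^{n-1} = n(1+\tfrac1{n-1})^{n-1} \le en$ and $(k-1)^{k-1}/k^k = \tfrac1k(1-\tfrac1k)^{k-1} \le \tfrac1k$, we get at most $en/k$. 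Inserting this into the above display produces $M \le Cq/k$ with an absolute constant $C$.

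For the exponential tail, I exploit the strict concavity of $\psi_{n,k}$. Define $H(\eps) := \psi_{n,k}(y_{n,k}) - \psi_{n,k}(y_{n,k}(1+\eps))$. Since $\psi_{n,k}''(y) = -(n-k)/y^2 - (k-1)/(1-y)^2 < 0$, the function $H$ is strictly convex and nonnegative on its domain of definition, with $H(0) = 0 = H'(0)$, so $H$ is monotone on each side of $0$. A direct computation yields
\[
H''(\eps) = \frac{n-k}{(1+\eps)^2} + \frac{(k-1)y_{n,k}^2}{(1-y_{n,k}(1+\eps))^2} \ge \frac{n-k}{(1+\eps)^2}.
\]
For $|\eps|\le \nicefrac{1}{4}$, the right-hand side is at least $16(n-k)/25$, so integrating twice from $0$ gives $H(\eps) \ge 8(n-k)\eps^2/25 \ge (n-k)\eps^2/4$. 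For $|\eps| > \nicefrac{1}{4}$, monotonicity of $H$ away from $0$ forces $H(\eps) \ge H(\pm\nicefrac{1}{4}) \ge (n-k)/50 \ge (n-k)/64$, which is exactly $(n-k)\min(|\eps|,\nicefrac{1}{4})^2/4$. Combining these two regimes with the bound $M \le Cq/k$ gives the claim.

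The main obstacle is the Stirling step: the factor $q$ is precisely the Gaussian-mode prediction for the binomial-like density $B_1$, so any looser bookkeeping would lose the sharp polynomial order. The near-cancellations among $n^n$, $(k-1)^{k-1}$, $(n-k)^{n-k}$ and the denominators $k^k$, $(n-1)^{n-1}$ must be tracked to extract exactly a factor $n/k$, and the corner cases $k = 1$ (where $0^0 = 1$ and $y_{n,1}=1$) and $k$ close to $n-1$ require a sanity check to ensure the constants remain uniform.
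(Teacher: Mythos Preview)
Your proof is correct and follows essentially the same route as the paper: split $B_1$ into the peak value $M=\frac1k\binom{n}{k}e^{\psi_{n,k}(y_{n,k})}$ times a Gaussian-like tail, handle $M$ via Stirling combined with the exact value of $e^{\psi_{n,k}(y_{n,k})}$ (your cancellation $n^n/(n-1)^{n-1}\le en$, $(k-1)^{k-1}/k^k\le 1/k$ is exactly the paper's simplification written multiplicatively rather than in log form), and use monotonicity of $\psi_{n,k}$ on either side of $y_{n,k}$ for $|\eps|>\tfrac14$. The only cosmetic difference is in the tail for $|\eps|\le\tfrac14$: the paper applies the scalar inequality $\log(1+\eps)\le\eps-\eps^2/4$ directly to the two log terms and watches the linear parts cancel, whereas you bound $H''(\eps)\ge(n-k)/(1+\eps)^2$ and integrate twice---these yield the same constant $\tfrac14$ and are interchangeable.
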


\def\ynk{y_{n,k}}
\def\pnk{\psi_{n,k}}

\begin{proof}
We start by defining
\begin{equation}
\Phi_{n,k}(\eps) := \exp\big(\pnk(\ynk(1+\eps))\big)
\end{equation}
and then
\begin{align*}
\Phi_{n,k}(\eps) & = \exp\left\{ (n-k) \log\left(\frac{n-k}{n-1}(1+\eps)\right) + (k-1) \log \left(1- \frac{n-k}{n-1}(1+\eps)\right) \right\}\\
& = \exp\left\{( \psi_{n,k}(y_{n,k}) + \delta \right\}.
\end{align*}
To bound $\delta$ we first assume that $|\eps|\le\nicefrac 14$ and get
\begin{align}
\delta  &= (n-k) \log(1+ \eps) + (k-1) \log\left(1 - \eps\frac{n-k}{k-1}\right)\le\nonumber \\
& \le (n-k) \eps - (n-k) \frac{\eps^2}4 + (k-1) (- \eps) \frac{n-k}{k-1}\nonumber \\
& \le - \frac{(n-k) \eps^2}{4}, \label{psiUB}
\end{align}
where we have used for the first log the fact that for $|\eps| \le
1/4$, $\log(1+ \eps) \le \eps - \eps^2/4$, and for the second log
simply $\log(1+a)\le a$. To remove the restriction on $\eps$, note that $\psi_{n,k}$ is monotone
increasing over $[0,y_{n,k}]$ and monotone decreasing over
$[y_{n,k},1]$. So we can write $\delta\le-\frac
14(n-k)\min\{|\eps|,1/4\}^2$, for all $\eps$.

On the other hand, from Stirling's formula it is easy to see that there exists a universal constant $C$ such that for all $n$ and all
$k \le (n/2)$,
\begin{equation}\label{stirling}
{n \choose k} \le C\sqrt{\frac{n}{k(n-k)}}\frac{n^n}{k^k (n-k)^{n-k}} .
\end{equation}
Combining \eqref{stirling}
and \eqref{psiUB} we get
\begin{align}
  {n\choose k}\Phi_{n,k}(\eps) & \le C\sqrt{\frac{n}{k(n-k)}} \exp\left\{ k
  \log\left(\frac nk\right) +(n-k) \log\left(\frac n{n-k}\right) \right. \nonumber  \\
  & \hspace{2cm} \left.+ (n-k) \log\left(\frac{n-k}{n-1}\right) + (k-1) \log \left(\frac{k-1}{n-1}\right) + \delta\right\}\nonumber  \\
  & = C\sqrt{\frac{n}{k(n-k)}}\cdot\frac{n-1}{k-1} \exp\left\{ n \log\left(
  \frac{n}{n-1} \right) + k \log\left(\frac{k-1}{k} \right)  +\delta \right\}\nonumber \\
  & \le \frac{Cn^{3/2}}{k^{3/2}(n-k)^{1/2}} \exp \left\{ -\frac{ \min\{|\eps|,\frac 14\}^2 (n-k) }4\right\}\label{Phink}
\end{align}
Multiplying further by $1/k$, we get the
claimed bound on $B_1$ in Lemma \ref{lem:B1}. 
\end{proof}

\begin{lemma}
  \label{lem:B2}
    When $x> y_{n,k}(1+\eps)$ and $\eps>0$
  \begin{equation}
    B_2 \le Cq \exp\left\{ - \frac{\min\{|\eps|,\nicefrac 14\}^2 (n-k)}4\right\}
  \end{equation}
  while if $x< y_{n,k}(1 + \eps)$ and $\eps<0$,
  \begin{equation}
    \Big|B_2 - \frac1k\Big| \le Cq \exp\left\{ - \frac{\min\{|\eps|,\nicefrac 14\}^2 (n-k)}4\right\}.
  \end{equation}
where $q=q_n(k)$ is as in Lemma \ref{lem:B1} and Theorem \ref{T:trans}.
\end{lemma}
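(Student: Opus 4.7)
The plan is to reduce both estimates to the pointwise bound $\binom nk\phi(x)\le Cq\exp(-\min\{|\eps|,\nicefrac 14\}^2(n-k)/4)$ that already sits inside the proof of Lemma \ref{lem:B1}: indeed \eqref{Phink} is precisely a bound on $\binom nk\Phi_{n,k}(\eps)$, and when $x=\ynk(1+\eps)$ one has $\Phi_{n,k}(\eps)=\phi(x)$. The geometric input that lets us do this reduction is that $\phi=e^{\pnk}$ is unimodal on $[0,1]$, strictly increasing on $[0,\ynk]$ and strictly decreasing on $[\ynk,1]$. In each of the two cases of the lemma, $x$ lies on one side of the peak and the relevant interval of integration is contained in the same monotone half, so $\phi$ attains its maximum over that interval at the endpoint $x$.

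For the subcritical case $\eps>0$ (so $x\ge \ynk$), $\phi$ is decreasing on $[x,1]$ and hence $\phi(y)\le\phi(x)$ there. This gives
\[
B_2=\binom nk\int_x^1\phi(y)\,dy\le\binom nk(1-x)\phi(x)\le\binom nk\phi(x),
\]
and \eqref{Phink} bounds the right-hand side by $Cq\exp(-\min\{|\eps|,\nicefrac 14\}^2(n-k)/4)$, which is exactly the claim.

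For the supercritical case $\eps<0$ (so $x\le\ynk$) the target is a bound on $|B_2-1/k|$, and I would first use Lemma \ref{lem:0xx1} to rewrite
\[
B_2=\binom nk\int_x^1\phi(y)\,dy=\frac 1k-\binom nk\int_0^x\phi(y)\,dy,
\]
so that $|B_2-1/k|=\binom nk\int_0^x\phi(y)\,dy$. Since $\phi$ is increasing on $[0,\ynk]\supseteq[0,x]$ we again have $\phi(y)\le\phi(x)$ on the range of integration, so $\int_0^x\phi(y)\,dy\le x\phi(x)\le\phi(x)$, and \eqref{Phink} delivers the required exponential bound once more.

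I do not expect a serious obstacle: the whole content of the lemma is the unimodality of $\phi$ plus \eqref{Phink} plus the Beta-integral identity of Lemma \ref{lem:0xx1}. The only small thing to verify is that the crude estimate ``integrand on $[0,x]$ or $[x,1]$ is at most $\phi(x)$'' is not wasteful, but the factor $x$ or $1-x$ lost this way is clearly absorbable by the polynomial prefactor $q$, since the exponential factor is the dominant term in the bound.
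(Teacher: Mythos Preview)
Your proposal is correct and follows essentially the same approach as the paper: bound the integrand by its maximum on the interval of integration using the unimodality of $\phi$, invoke Lemma~\ref{lem:0xx1} in the supercritical case to switch to $\int_0^x$, and then apply the estimate \eqref{Phink}. The only cosmetic difference is that the paper bounds the integrand directly by $\Phi_{n,k}(\eps)=\phi(y_{n,k}(1+\eps))$ (which is $\ge\phi(x)$ under the hypothesis $x>y_{n,k}(1+\eps)$, resp.\ $x<y_{n,k}(1+\eps)$) and drops the interval-length factor immediately, whereas you bound by $\phi(x)$ and carry the harmless factor $1-x$ or $x$ one extra step.
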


\begin{proof}
We analyze these two cases separately:

\medskip \noindent \textbf{Case 1:} $x> y_{n,k}(1+ \eps)$ with $\eps>0$. Then, since $\psi_{n,k}$ is monotone decreasing after $y_{n,k}$, we have
$$
I(x) = \int_x^1 e^{\psi_{n,k}(y)} dy \le  e^{\psi_{n,k}(y_{n,k}(1+\eps))},
$$
and hence
\begin{equation}\label{B2case1}
B_2 \le {n \choose k} e^{\psi_{n,k}(y_{n,k}(1+\eps))} ={n\choose k} \Phi_{n,k}(\eps).
\end{equation}

\medskip \noindent \textbf{Case 2:} $x<y_{n,k}(1+ \eps)$ with
$\eps<0$. As explained in the introduction, here we will Lemma
\ref{lem:0xx1}, which gives us
%
%
\begin{align*}
B_2 = { n \choose k}I(x) & = \frac1k  - {n \choose k} \int_0^x e^{\psi_{n,k}(y)}dy
\end{align*}
and this time, since $\psi_{n,k}$ is monotone {increasing} over $[0,y_{n,k}(1+ \eps)]$ we conclude that
$$
\int_0^x e^{\psi_{n,k}(y) }dy \le e^{\psi_{n,k}(y_{n,k}(1+\eps))}.
$$
so that
\begin{equation}\label{B2case2}
|B_2 - 1/k| \le {n\choose k}\Phi_{n,k}(\eps)
\end{equation}
Hence in both cases \eqref{B2case1} and \eqref{B2case2}, we conclude by \eqref{Phink}.
\end{proof}

\def\tnk{t_{n,k}}
With these two lemmas, we are now able to finish the proof of Theorem \ref{T:trans}.
\begin{proof}
  Suppose $t=t_{n,k} +\eps$, with $\eps \in \mathbb{R}$. Then
  \[
  x = e^{- t k} = e^{-k t_{n,k}} e^{ - k \eps}
    = \ynk e^{- k \eps}
  \]
  so
\begin{equation}\label{E:xyeps}
\frac{|x-y_{n,k}|}{y_{n,k}}=|e^{-k\eps}-1|>c\min\{|k\eps|,1\}=c\min\{k|t-t_{n,k}|,1\}.
\end{equation}
Thus
\begin{align*}
\left|\E(s_k(t))-\frac 1k\mathbf
1_{\{t>t_{n,k}\}}\right|&=\left|xB_1+B_2-\frac 1k\mathbf
1_{\{t>t_{n,k}\}}\right|\\
\mbox{By lemmas \ref{lem:B1} and \ref{lem:B2}}\qquad
  &\le Cq\exp\left(-\frac{\min\{|x-y_{n,k}|/y_{n,k},1/4\}^2(n-k)}{4}\right)\\
\mbox{By (\ref{E:xyeps})}\qquad
  &\le Cq\exp\big(-c\min\{k|t-t_{n,k}|,1\}^2(n-k)\big).
\end{align*}
As needed.
\end{proof}

\subsection{Proof of Theorem \ref{T:gc}}

We now turn to the proof of Theorem \ref{T:gc}. Let $t = c/n$ and let $X_n(\eps) = \frac1n \sum_{k \ge n \eps} k s_k(t)$ be the relative mass of cycles greater than $n \eps$. Then we have the following lemma:

\begin{lemma}\label{lem:massgc} Assume $c>1$ and let $\theta(c) = \max \{ z>0: 1-z > e^{-z c}\}$. Then $\theta(c)>0$ and
$$
\liminf_{n \to \infty }\E(X_n(\eps)) \ge \theta(c) - \eps.
$$
\end{lemma}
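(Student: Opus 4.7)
The plan is to extract the claim directly from Theorem \ref{T:trans}: for $c > 1$, the time $t = c/n$ lies beyond $t_{n,k}$ precisely when $k/n$ is below the threshold $\theta(c)$, so for such $k$ we have $\E(s_k(t)) \approx 1/k$ with super-polynomially small error. Each such $k$ then contributes roughly $1/n$ to $\E(X_n(\eps))$, and the number of admissible $k$ is about $n(\theta(c) - \eps)$, which gives the desired bound.

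First I would check that $\theta(c) > 0$. Set $h(z) = 1 - z - e^{-zc}$; then $h(0) = 0$, $h'(0) = c - 1 > 0$, and $h''(z) = -c^2 e^{-zc} < 0$. So $h$ is strictly concave, strictly positive on a right neighborhood of $0$, and negative at $z = 1$, whence $\{h > 0\} = (0, \theta(c))$ with $\theta(c) \in (0,1)$. In particular, $g(\beta) := c\beta + \log(1-\beta)$, which satisfies $g(\beta) > 0 \Leftrightarrow h(\beta) > 0$, is strictly positive on the whole interval $(0, \theta(c))$.

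Next, fix $\delta \in (0, \theta(c) - \eps)$ (assuming without loss of generality $\eps < \theta(c)$, else the claim is vacuous since $X_n(\eps) \ge 0$). For $k$ with $\beta := k/n \in [\eps, \theta(c) - \delta]$, a direct calculation using $t_{n,k} = -k^{-1} \log((n-k)/(n-1))$ gives
\[
k(t - t_{n,k}) = c\beta + \log\frac{n-k}{n-1} \longrightarrow g(\beta) \quad \text{as } n \to \infty,
\]
uniformly in $\beta$, and by continuity and compactness, $g \ge 2\gamma$ on $[\eps, \theta(c) - \delta]$ for some $\gamma = \gamma(\eps, \delta) > 0$. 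Hence $k(t - t_{n,k}) \ge \gamma$ uniformly in this range, for all $n$ large. Since $n - k \ge n(1 - \theta(c) + \delta) \asymp n$ and $q_n(k) = O(1/n)$ uniformly, Theorem \ref{T:trans} yields
\[
\E(s_k(t)) \ge \frac{1}{k} - \frac{C}{n} e^{-c' n} = \frac{1}{k}(1 + o(1))
\]
uniformly in $k$, with error exponentially small in $n$.

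Finally, I would sum:
\[
\E(X_n(\eps)) \ge \sum_{k = \lceil n\eps \rceil}^{\lfloor n(\theta(c) - \delta) \rfloor} \frac{k \E(s_k(t))}{n} \ge \frac{\lfloor n(\theta(c) - \delta) \rfloor - \lceil n\eps \rceil + 1}{n}(1 + o(1)),
\]
which tends to $\theta(c) - \delta - \eps$. Taking $\liminf$ yields $\liminf_n \E(X_n(\eps)) \ge \theta(c) - \delta - \eps$, and letting $\delta \to 0^+$ gives the claim. The only step requiring real care is the uniform lower bound on $g$ (hence on $k(t - t_{n,k})$) over $[\eps, \theta(c) - \delta]$, which rests on the concavity observation in the first step showing that $\{g > 0\}$ is exactly the interval $(0, \theta(c))$; everything after that is the routine translation of the pointwise estimate from Theorem \ref{T:trans} into a uniform one over the relevant window of $k$.
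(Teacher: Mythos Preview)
Your proof is correct and follows essentially the same route as the paper: fix an intermediate level $\alpha<\theta(c)$ (your $\theta(c)-\delta$), show that $t-t_{n,k}$ is bounded below uniformly for $k/n\in[\eps,\alpha]$, feed this into Theorem~\ref{T:trans}, sum, and let $\alpha\to\theta(c)$. One small slip: for $k\asymp n$ and $n-k\asymp n$ one has $q_n(k)=n^{3/2}k^{-3/2}(n-k)^{-1/2}\asymp n^{-1/2}$, not $O(1/n)$; this is harmless since any polynomial bound on $q$ is swallowed by the exponential error term.
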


\begin{proof}
  Fix $\alpha < \theta(c)$ and let $\eps n \le k \le \alpha n$. It
  follows that
\begin{equation}\label{E:tnkcn}
t_{n,k}<\frac{c-\delta}{n}\quad \textrm{for some $\delta>0$.}
\end{equation}
Indeed, $k/n\le \alpha <\theta(c)$ gives that
$1-\nicefrac kn >e^{-(c-\delta)(k/n)}$ and with
\[
e^{-kt_{n,k}}\stackrel{\textrm{def}}{=}\frac{n-k}{n-1}>1-\frac kn>e^{-(c-\delta)(k/n)}
\]
we get (\ref{E:tnkcn}).
  Hence by Theorem \ref{T:trans}, noting that $q_n(k) \le C$ for any $k $ in this range,
  $$
  \E(s_k(t)) > \frac{1}{k} - C\exp( - \delta' n)
  $$
  for some $C, \delta'>0$ depending on $\alpha $ and $c$. Summing up between $k=\lceil \eps n \rceil$ and $k = \lfloor \alpha n \rfloor$, we get
  $$
  \E(X_n(\eps)) > (\alpha - \eps) - Cn^2 \exp( - \delta' n)
  $$
  The lemma follows immediately by letting $n \to \infty$, since $\alpha$ was arbitrarily close to $\theta(c)$.
\end{proof}

\begin{lemma}
  \label{lem:compRG} For any $\eps>0$, on an event of probability tending to 1 as $n\to \infty$,
  $$
  X_n(\eps) \le \theta(c) + \eps.
  $$
\end{lemma}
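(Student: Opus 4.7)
The plan is to compare the cycle structure of $\sigma_t$ to the component structure of an associated Erd\H{o}s--R\'enyi random graph, exploiting the obvious domination ``cycles are contained in components''. This is the direction opposite to Lemma \ref{lem:massgc}: there the representation-theoretic Theorem \ref{T:trans} gave us a lower bound on the mass of macroscopic cycles; here the upper bound will come from a purely probabilistic coupling with the graph $G(n,c/n)$.

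Concretely, define the random graph $G_t$ on $n$ vertices whose edge set is $\{\{i,j\}:\textrm{transposition $(i,j)$ has been applied at least once by time }t\}$. Under the interchange process on $K_n$ sped up by $\binom n2$, each edge is independently present with probability $1-e^{-t}=1-e^{-c/n}$, so $G_t$ has the distribution of an Erd\H{o}s--R\'enyi random graph $G(n,p_n)$ with $p_n\sim c/n$. The first key observation, already used in the introduction, is that every cycle of $\sigma_t$ is contained in a single connected component of $G_t$: each composition of transpositions keeps orbits within the graph spanned so far. Hence, denoting by $\cC(v)$ the component of $v$ in $G_t$,
\[
  \sum_{k\ge n\eps} k\,s_k(t) \;=\; \#\big\{v: v\text{ lies in a cycle of length }\ge n\eps\big\} \;\le\; \#\big\{v: |\cC(v)|\ge n\eps\big\}.
\]

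Next I would invoke the classical structure theorem for $G(n,c/n)$ with $c>1$: with probability tending to $1$ as $n\to\infty$, $G_t$ has a unique giant component of size $(\theta(c)+o(1))n$, while all other components have size $O(\log n)$. In particular, for any fixed $\eps>0$ and all $n$ large enough, the only component of size $\ge n\eps$ is the giant one, so $\#\{v:|\cC(v)|\ge n\eps\} = (\theta(c)+o(1))n$ with high probability. (Equivalently, and more quantitatively, one can apply a Chernoff / exploration-process bound on $\#\{v:|\cC(v)|\ge n\eps\}$ directly; the function $\theta(c)$ defined in Lemma \ref{lem:massgc} coincides with the standard survival probability of a Poisson$(c)$ Galton--Watson tree, which is exactly the limiting relative size of the giant component.) Dividing by $n$, the inequality above becomes
\[
  X_n(\eps) \;\le\; \theta(c)+o(1)
\]
on this high-probability event, so for $n$ large enough the $o(1)$ is smaller than $\eps$ and we obtain $X_n(\eps)\le\theta(c)+\eps$ as required.

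No real obstacle is anticipated: the coupling ``cycles sit inside components'' is classical (it appears already in Schramm's work cited above), and the giant-component asymptotics for $G(n,c/n)$ are standard. The only mild care needed is to ensure that the ``non-giant'' components really are all below the threshold $n\eps$; for fixed $\eps>0$ this is immediate from the $O(\log n)$ bound, but if one wanted the statement uniformly in $\eps\to 0$ one would have to track the rate more carefully. Since the lemma is stated for fixed $\eps>0$, this is not needed here.
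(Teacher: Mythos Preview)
Your proposal is correct and follows essentially the same route as the paper: define the random graph of applied transpositions, identify it as Erd\H{o}s--R\'enyi $G(n,1-e^{-c/n})$, use that cycles are contained in components, and invoke the giant-component theorem together with the identification of $\theta(c)$ with the Poisson$(c)$ Galton--Watson survival probability. The paper's argument is the same, just phrased slightly more tersely.
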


\begin{proof}
  Let $G(t)$ be the (random) graph where there is an edge $(i,j)$ if and only if the transposition $(i,j)$ has occurred prior to $t$. Then $G(t)$ is a realization of $G(n,p)$ with $p = 1- \exp( - c/n) \sim c/n$. Furthermore, it is well-known and easy to see that any cycle of $\sigma_t$ is a subset of some connected component of $G(t)$. By the Erd\H{o}s-Renyi theorem (see, e.g., \cite{ErdosRenyi1, ErdosRenyi2}), it is known that on an event of high probability, $G(t)$ has only one component greater than $\eps n$ (the giant component) and thus $X_n(\eps) \le Y_n$, where $Y_n$ is the rescaled size of the largest component.  Furthermore, the same papers show
  $$
  \frac{Y_n}n \to \xi(c),
  $$
  in probability, where $\xi(c)$ is the survival probability of a
  Poisson $(c)$ Galton-Watson process. Hence $\xi(c) = \theta(c)$ and
  the lemma follows.
\end{proof}

\begin{proof}[Proof of Theorem \ref{T:gc}] Recall that the theorem
  states that a cycle $>\eps n$ exists with high probability. Consider the random variable $ E_n = \theta(c) + \eps - X_n(\eps)$. Then $E_n$
  is bounded by 2, nonnegative with high probability (from the
  Galton-Watson argument), and has expectation bounded by $2\eps$
  (from Lemma \ref{lem:massgc}).
Hence by Markov's inequality $\P(E_n > \sqrt{\eps}) \le 3 \sqrt{\eps}$
for all $n$ sufficiently large. On the complement event $X_n(\eps) >
\theta(c) - \sqrt{\eps}$ and hence $X_n(\eps)>0$, in which case the
length of the longest cycle is $> n \eps$. Since $\eps$ was arbitrary, Theorem \ref{T:gc} follows.
\end{proof}

\subsection{Proof of Theorem \ref{T:slowdown}}

Recall that we wish to estimate $d(t)$, the graph distance to
$\sigma(t)$. Let $N(t)$ denote the number of cycles of $\sigma_t$. It is well known and easy to check that $d(t) = n - N(t)$. Let $\eps>0$ and fix $K \ge 1/\eps$. Then observe that, since there can never be more than $n\eps$ cycles greater than $K$,
$$
\frac1{n}\E(N(t)) = \frac1n \sum_{k=1}^K \E(s_k(t)) + O( \eps).
$$
Thus Theorem \ref{T:slowdown} follows from the claim: for all fixed $k \ge 1$, and all $c>0$, if $t = c/n$,
\begin{equation}\label{claimsk}
  \frac1n \E(s_k(t)) \to \frac{k^{k-2}}{k!} \frac1c (ce^{-c})^k.
\end{equation}
For this we apply Theorem \ref{T:exact} with these values of $k$ and
$t$, and note that $x = e^{-kt}$ satisfies $x \to 1$, $1- x \sim k
c/n$ and $x^{n-k} \to e^{-ck}$ (the notation $a\sim b$ is short for
$a=b(1+o(1))$, here and below). Since ${n \choose k} \sim n^k / k!$,
we deduce immediately that (recall that $\phi(x)=x^{n-k}(1-x)^{k-1}$),
$$
\frac1k {n \choose k} x \phi(x) \sim n \frac{k^{k-2}}{k!} \frac1c (ce^{-c})^k.
$$
Moreover, it is plain that $\frac1n { n \choose k} \phi(y)$ is uniformly bounded for $x \le y \le 1$, hence
$$
\frac1n{ n \choose k} \int_x^1 \phi(y)dy \le C(1-x) \to 0.
$$
\eqref{claimsk} follows immediately, hence so does Theorem \ref{T:slowdown}.



\end{document}